\newtheorem{Theorem}{Theorem}
\newtheorem{Definition}{Definition}
\newtheorem{Remark}{Remark}
\newtheorem{ex}{Example}
\def\a{{\alpha(t)}}
\def\t{\tau}
\def\LI{{_aI_t^{\a}}}
\def\RI{{_tI_b^{\a}}}
\def\RD{{_tD_b^{\a}}}
\def\LDM{{_a\mathbb{D}_t^{\a}}}
\def\RDM{{_t\mathbb{D}_b^{\a}}}
\def\C{\left(^{-\alpha(t)}_{\quad k}\right)}
\def\D{\left(^{\,\,-\alpha(t)}_{k-n-1}\right)}
\def\DS{\displaystyle}
\begin{document}

\title{An expansion formula with higher-order derivatives\\
for fractional operators of variable order\thanks{This is a preprint 
of a paper whose final and definite form will be published in 
\emph{The Scientific World Journal} ({\tt http://www.hindawi.com/journals/tswj}). 
Submitted 27-Aug-2013; accepted 19-Sept-2013.}}

\author{Ricardo Almeida\\
\texttt{ricardo.almeida@ua.pt}
\and Delfim F. M. Torres\\
\texttt{delfim@ua.pt}}

\date{Center for Research and Development in Mathematics and Applications (CIDMA)\\
Department of Mathematics, University of Aveiro, 3810--193 Aveiro, Portugal}

\maketitle


\begin{abstract}
We obtain approximation formulas for fractional integrals and derivatives
of Riemann--Liouville and Marchaud types with a variable fractional order.
The approximations involve integer-order derivatives only.
An estimation for the error is given. The efficiency of the approximation method
is illustrated with examples. As applications, we show how the obtained results
are useful to solve differential equations and problems of the calculus of variations
that depend on fractional derivatives of Marchaud type.

\bigskip

\noindent \textbf{Keywords}: fractional calculus, variable fractional order,
approximation methods, fractional differential equations,
fractional calculus of variations.

\smallskip

\noindent \textbf{Mathematics Subject Classification 2010}: 26A33, 33F05, 34A08, 49M99.
\end{abstract}


\section{Introduction}

Fractional calculus is a natural extension of the integer-order calculus
by considering derivatives and integrals of arbitrary real
or complex order $\alpha \in \mathbb{K}$, with $\mathbb{K}=\mathbb{R}$
or $\mathbb{K}=\mathbb{C}$. The subject was born from a
famous correspondence between L'Hopital and Leibniz in 1695,
and then developed by many famous mathematicians, like
Euler, Laplace, Abel, Liouville and Riemann, just to mention a few names.
Recently, fractional calculus has call the attention of a vast number of researchers,
not only in mathematics, but also in physics and in engineering, and has proven
to better describe certain complex phenomena in nature \cite{Dalir,Machado}.

Since the order $\alpha$ of the integrals and derivatives may take any value,
another interesting extension is to consider the order not to be a constant
during the process, but a variable $\a$ that depends on time.
This provides an extension of the classical fractional calculus and was introduced
by Samko and Ross in 1993 \cite{SamkoRoss} (see also \cite{Samko}).
The variable order fractional calculus is nowadays recognized as a useful tool,
with successful applications in mechanics, in the modeling of
linear and nonlinear visco-elasticity oscillators,
and other phenomena where the order of the derivative varies with time.
For more on the subject, and applications of it, we mention
\cite{AlmeidaSamko,Coimbra,Coimbra2,Diaz,Lorenzo,Ramirez1,Ramirez2}.
For a numerical approach see, e.g., \cite{Ma,Valerio,Zhuang}.
Results on differential equations and the calculus of variations
with fractional operators of variable order can be found
in \cite{Od1,MyID:268} and references therein.
In this paper we show how fractional derivatives and integrals
of variable order can be approximated by classical integer-order operators.

The outline of the paper is the following. In Section~\ref{sec:FC}
we present the necessary definitions, namely the fractional operators
of Riemann--Liouville and Marchaud of variable order. Some properties
of the operators are also given. The main core of the paper is Section~\ref{sec:theorems},
where we prove the expansion formulas for the considered fractional operators,
with the size of the expansion being the derivative of order $n\in\mathbb{N}$.
In Section~\ref{sec:example} we show the accuracy of our method with some examples
and how the approximations can be applied in different situations
to solve problems involving variable order fractional operators.


\section{Fractional calculus of variable order}
\label{sec:FC}

In the following, the order of the fractional operators
is given by a function $\alpha\in C^1([a,b],]0,1[)$;
$x(\cdot)$ is assumed to ensure convergence
for each of the involved integrals. For a complete and rigorous
study of fractional calculus we refer to \cite{samko0}.

\begin{Definition}
\label{def:1}
Let $x(\cdot)$ be a function with domain $[a,b]$. Then, for $t\in [a,b]$,
\begin{itemize}
\item the left Riemann--Liouville fractional integral of order $\alpha(\cdot)$ is given by
$$
\LI x(t)=\frac{1}{\Gamma(\alpha(t))}\int_a^t (t-\t)^{\alpha(t)-1}x(\t)d\t,
$$

\item the right Riemann--Liouville fractional integral of order $\alpha(\cdot)$ is given by
$$
\RI x(t)=\frac{1}{\Gamma(\alpha(t))}\int_t^b (\t-t)^{\alpha(t)-1}x(\t)d\t,
$$

\item the left Riemann--Liouville fractional derivative of order $\alpha(\cdot)$ is given by
$$
{_aD_t^{\a}} x(t)=\frac{1}{\Gamma(1-\a)}\frac{d}{dt}\int_a^t (t-\t)^{-\alpha(t)}x(\t)d\t,
$$

\item the right Riemann--Liouville fractional derivative of order $\alpha(\cdot)$ is given by
$$
\RD x(t)=\frac{-1}{\Gamma(1-\a)}\frac{d}{dt}\int_t^b (\t-t)^{-\a}x(\t)d\t,
$$

\item the left Marchaud fractional derivative of order $\alpha(\cdot)$ is given by
\begin{equation}
\label{eq:LMD}
\LDM  x(t)=\frac{x(t)}{\Gamma(1-\a)(t-a)^\a}+ \frac{\a}{\Gamma(1-\a)}
\int_a^t \frac{x(t)-x(\t)}{(t-\t)^{1+\alpha(t)}}d\t,
\end{equation}

\item the right Marchaud fractional derivative of order $\alpha(\cdot)$ is given by
$$
\RDM x(t)=\frac{x(t)}{\Gamma(1-\a)(b-t)^\a}+ \frac{\a}{\Gamma(1-\a)}
\int_t^b \frac{x(t)-x(\t)}{(\t-t)^{1+\alpha(t)}} d\t.
$$
\end{itemize}
\end{Definition}

\begin{Remark}
It follows from Definition~\ref{def:1} that
$$
{_aD_t^{\a}} x(t)=\frac{d}{dt} {_aI_t^{1-\a}}x(t)
\quad \mbox{and} \quad {_tD_b^{\a}} x(t)=-\frac{d}{dt} {_tI_b^{1-\a}}x(t).
$$
\end{Remark}

\begin{ex}[See \cite{SamkoRoss}]
\label{ex1}
Let $x$ be the power function $x(t)=(t-a)^\gamma$.
Then, for $\gamma>-1$, we have
\begin{equation*}
\begin{split}
\LI x(t) &= \frac{\Gamma(\gamma+1)}{\Gamma(\gamma+\a+1)}(t-a)^{\gamma+\a},\\
\LDM x(t) &= \frac{\Gamma(\gamma+1)}{\Gamma(\gamma-\a+1)}(t-a)^{\gamma-\a},
\end{split}
\end{equation*}
and
\begin{multline*}
{_aD_t^{\a}} x(t)=\frac{\Gamma(\gamma+1)}{\Gamma(\gamma-\a+1)}(t-a)^{\gamma-\a}\\
-\alpha^{(1)}(t)\frac{\Gamma(\gamma+1)}{\Gamma(\gamma-\a+2)}(t-a)^{\gamma
-\a+1}\left[\ln(t-a)-\psi(\gamma-\a+2)+\psi(1-\a)\right],
\end{multline*}
where $\psi$ is the Psi function, that is, the derivative
of the logarithm of the Gamma function:
$$
\psi(t) =\frac{d}{dt} \ln\left({\Gamma(t)}\right)= \frac{\Gamma'(t)}{\Gamma(t)}.
$$
\end{ex}

From Example~\ref{ex1} we see that ${_aD_t^{\a}} x(t)\not=\LDM  x(t)$.
Also, the symmetry on power functions is violated when we consider
$\LI x(t)$ and ${_aD_t^{\a}} x(t)$, but holds for $\LI x(t)$ and $\LDM  x(t)$.
Later we explain this better, when we deduce the expansion
formula for the Marchaud fractional derivative. In contrast with the
constant fractional order case, the law of exponents fails for fractional integrals
of variable order. However, a weak form holds (see \cite{SamkoRoss}):
if $\beta(t)\equiv \beta$, $t\in ]0,1[$, then
${_aI_t^{\a}}{_aI_t^{\beta}}x(t)={_aI_t^{\a+\beta}}x(t)$.


\section{Expansion formulas with higher-order derivatives}
\label{sec:theorems}

The main results of the paper provide approximations of the fractional
derivatives of a given function $x$ by sums involving only integer derivatives of $x$.
The approximations use the generalization of the binomial coefficient formula to real numbers:
$$
\C (-1)^k=\left(^{\alpha(t)+k-1}_{\quad\quad k}\right)=\frac{\Gamma(\a+k)}{\Gamma(\a) k!}.
$$

\begin{Theorem}
\label{Mainthm}
Fix $n \in \mathbb{N}$ and $N\geq n+1$, and let $x(\cdot)\in C^{n+1}([a,b],\mathbb{R})$.
Define the (left) moment of $x$ of order $k$ by
$$
V_k(t) =(k-n)\int_a^t (\t-a)^{k-n-1}x(\t)d\t.
$$
Then,
$$
{_aD_t^{\a}} x(t)=S_1(t)-S_2(t)+E_{1,N}(t)+E_{2,N}(t)
$$
with
\begin{equation}
\label{eq:S1}
S_1(t)=(t-a)^{-\a}\left[\sum_{k=0}^{n}A(\a,k)(t-a)^{k}x^{(k)}(t)
+\sum_{k=n+1}^{N}B(\a,k)(t-a)^{n-k}V_k(t)\right],
\end{equation}
where
\begin{equation*}
\begin{split}
A(\a,k)&=\frac{1}{\Gamma(k+1-\a)}\left[1
+\sum_{p=n+1-k}^{N}\frac{\Gamma(p-n+\a)}{\Gamma(\a-k)(p-n+k)!}\right],\quad k = 0,\ldots,n,\\
B(\a,k)&=\frac{\Gamma(k-n+\a)}{\Gamma(-\a)\Gamma(1+\a)(k-n)!},
\end{split}
\end{equation*}
and
\begin{multline}
\label{eq:S2}
S_2(t)=\frac{x(t)\alpha^{(1)}(t)}{\Gamma(1-\a)}(t-a)^{1-\a}\left[
\frac{\ln(t-a)}{1-\a}-\frac{1}{(1-\a)^2}\right.\\
\left.-\ln(t-a)\sum_{k=0}^N \C
\frac{(-1)^k}{k+1}+\sum_{k=0}^N\C(-1)^k
\sum_{p=1}^N\frac{1}{p(k+p+1)}\right]\\
+\frac{\alpha^{(1)}(t)}{\Gamma(1-\a)}(t-a)^{1-\a}\left[
\ln(t-a)\sum_{k=n+1}^{N+n+1}\D\frac{(-1)^{k-n-1}}{k-n}(t-a)^{n-k}V_{k}(t)\right.\\
\left.-\sum_{k=n+1}^{N+n+1}\D(-1)^{k-n-1}
\sum_{p=1}^N\frac{1}{p(k+p-n)}(t-a)^{n-k-p} V_{k+p}(t)\right].
\end{multline}
The error of the approximation ${_aD_t^{\a}} x(t) \approx S_1(t)-S_2(t)$
is given by $E_{1,N}(t)+E_{2,N}(t)$, where $E_{1,N}(t)$
and $E_{2,N}(t)$ are bounded by
\begin{equation}
\label{eq:err1}
|E_{1,N}(t)|\leq L_{n+1}(t)\frac{\exp((n-\a)^2+n-\a)}{\Gamma(n+1-\a)(n-\a)N^{n-\a}}(t-a)^{n+1-\a}
\end{equation}
and
\begin{equation}
\label{eq:err2}
|E_{2,N}(t)|\leq  \frac{L_1(t) \left|\alpha^{(1)}(t)\right|(t-a)^{2-\alpha(t)}
\exp(\alpha^2(t)-\a)}{\Gamma(2-{\a}){N^{1-\a}}}  \left[\left|\ln(t-a)\right|+\frac{1}{N}\right]
\end{equation}
with
$$
L_{j}(t)=\displaystyle\max_{\t \in [a,t]}\left|x^{(j)}(\t)\right|, \quad j\in\{1,n+1\}.
$$
\end{Theorem}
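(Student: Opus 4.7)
The plan is to proceed in two stages. First, I would obtain an expansion for ${_aI_t^{1-\a}}x(t)=\frac{1}{\Gamma(1-\a)}\int_a^t(t-\t)^{-\a}x(\t)\,d\t$ in terms of the values $x^{(k)}(t)$ and the moments $V_k(t)$, plus a controlled remainder. Second, I would differentiate in $t$, using the Remark that ${_aD_t^{\a}}x(t)=\frac{d}{dt}\,{_aI_t^{1-\a}}x(t)$, and split the resulting expression into an $\alpha^{(1)}(t)$-free part (which becomes $S_1$) and an $\alpha^{(1)}(t)$-proportional part (which becomes $S_2$). For the expansion of the fractional integral, the key device is the generalized binomial expansion of the kernel: writing $(t-\t)^{-\a}=(t-a)^{-\a}\bigl(1-\tfrac{\t-a}{t-a}\bigr)^{-\a}$ and expanding the second factor as $\sum_{k=0}^N\C(-1)^k\bigl(\tfrac{\t-a}{t-a}\bigr)^k$ plus a remainder. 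Integrating term-by-term in $\int_a^t(t-\t)^{-\a}x(\t)\,d\t$ turns each resulting integral $\int_a^t(\t-a)^kx(\t)\,d\t$ (for $k\geq n+1$) directly into a shifted moment $V_{k+n+1}(t)$; for $k\leq n$ I would further replace $x(\t)$ by its $n$th-order Taylor polynomial about $\t=t$ with integral remainder, and after interchange of the resulting finite double sum (in binomial index and Taylor index) the boundary contributions regroup into the $A(\a,k)\,x^{(k)}(t)(t-a)^k$ terms, leaving a Taylor remainder that will feed $E_{1,N}$.

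Differentiating the assembled expansion in $t$ acts on four kinds of $t$-dependence: the prefactor $\frac{1}{\Gamma(1-\a)}$, the power $(t-a)^{-\a}$, the coefficients $\C$, and the moments $V_k(t)$. The $\alpha^{(1)}(t)$-free pieces collect into \eqref{eq:S1} (using $V_k'(t)=(k-n)(t-a)^{k-n-1}x(t)$ and the product rule on the $x^{(k)}$ contributions). The $\alpha^{(1)}(t)$-proportional pieces arise from differentiating $(t-a)^{-\a}$ (producing $\ln(t-a)$), $\frac{1}{\Gamma(1-\a)}$ (producing $\psi(1-\a)$, whose series expansion contributes the $\frac{1}{(1-\a)^2}$ term and the $\frac{1}{p(k+p+1)}$ sums), and $\C$ (producing further digamma-type series); assembling these yields \eqref{eq:S2}.

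For the error bounds, $E_{1,N}$ consists of the binomial tail for $k>N$ integrated against the Taylor remainder of $x$. Using $\C(-1)^k=\Gamma(k+\a)/[\Gamma(\a)k!]$ with a Stirling-type estimate of the gamma ratio, together with an integral comparison for the tail sum $\sum_{k>N}k^{\a-n-2}$, produces the $1/[(n-\a)N^{n-\a}]$ factor in \eqref{eq:err1}, while $L_{n+1}(t)$ comes from bounding $|x^{(n+1)}|$ on $[a,t]$. The bound \eqref{eq:err2} uses the same template applied to the $\alpha^{(1)}$-branch, where only a single Taylor step is needed, so only $L_1(t)$ appears, and the $|\ln(t-a)|+1/N$ factor arises from applying the same binomial-tail estimate to the log-weighted series. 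The principal obstacle is the algebraic bookkeeping of the differentiation step: recombining the $\psi$-function and $\ln(t-a)$ contributions from differentiating $\frac{1}{\Gamma(1-\a)}$, $(t-a)^{-\a}$, and $\C$ into the single compact form \eqref{eq:S2}, and in particular matching the double sum $\sum_{k=0}^N\C(-1)^k\sum_{p=1}^N\frac{1}{p(k+p+1)}$ with the series produced by termwise differentiation of the binomial coefficients with respect to $\a$, is the delicate technical core of the proof.
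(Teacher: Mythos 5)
Your overall strategy inverts the order of operations used in the paper, and the inversion is where the argument develops a genuine gap. The paper's first move is the change of variable $t-\tau=u-a$ inside ${_aI_t^{1-\a}}x(t)$ followed by differentiation under the integral sign; this produces, exactly and \emph{before} any approximation, the splitting
$$
{_aD_t^{\a}}x(t)=\frac{1}{\Gamma(1-\a)}\left[\frac{x(a)}{(t-a)^{\a}}+\int_a^t(t-\tau)^{-\a}x^{(1)}(\tau)\,d\tau\right]-\frac{\alpha^{(1)}(t)}{\Gamma(1-\a)}\int_a^t(t-\tau)^{-\a}\ln(t-\tau)\,x(\tau)\,d\tau ,
$$
with the two integrals identified as $S_1$ and $S_2$; only then are they expanded (the first via the constant-order computations of \cite{Pooseh1}, the second, following \cite{Atanackovic1}, by an integration by parts and truncation at order $N$ of the binomial series of $\left(1-\frac{u-a}{t-a}\right)^{-\alpha(t)}$ and the Taylor series of $\ln\left(1-\frac{u-a}{t-a}\right)$). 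You instead expand ${_aI_t^{1-\a}}x(t)$ first and differentiate afterwards. The first concrete problem is that your error terms then become derivatives of the remainder of the integral expansion, and a bound on a remainder does not bound its derivative: you would have to justify term-by-term differentiation of the infinite tail series and redo the tail estimates with the extra factor of $k$ that differentiating $(\tau-a)^k/(t-a)^k$ brings down. None of this is addressed, and it is precisely the difficulty the paper's ordering avoids.

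The second, more decisive problem is the matching you yourself flag as the "delicate technical core" but leave unresolved. In your scheme the $\alpha^{(1)}(t)$-proportional part arises from $\frac{d}{d\alpha}$ acting on the Gamma functions in $A(\a,k)$, $B(\a,k)$ and on the binomial coefficients, i.e., it is built from digamma values. The theorem's $S_2$, by contrast, contains the truncated double sums $\sum_{k=0}^N\C(-1)^k\sum_{p=1}^N\frac{1}{p(k+p+1)}$, which in the paper come from cutting the logarithm series at $p=N$ \emph{after} the exact splitting. For finite $N$ these are genuinely different expressions: the $\psi$ values equal infinite harmonic-type series (with Euler constants that must cancel), so reconciling your output with \eqref{eq:S2} would force the discrepancy into $E_{2,N}$ and require a re-derivation of the stated bound \eqref{eq:err2}, which your sketch does not attempt. (Your description of how \eqref{eq:err1} is obtained --- binomial tail against the Taylor remainder, a Stirling-type bound on the Gamma ratio, and an integral comparison for the tail sum --- is consistent with how that bound is actually proved in \cite{Pooseh1}; the gap is in establishing the decomposition itself.)
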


\begin{proof}
Starting with equality
$$
{_aD_t^{\a}} x(t)=\frac{1}{\Gamma(1-\a)}
\frac{d}{dt}\int_a^t (t-\t)^{-\alpha(t)}x(\t)d\t,
$$
doing the change of variable $t-\t=u-a$ over the integral,
and then differentiating it, we get
\begin{equation*}
\begin{split}
{_aD_t^{\a}} x(t)
&=\displaystyle\frac{1}{\Gamma(1-\a)}\frac{d}{dt}\int_a^t (u-a)^{-\alpha(t)}x(t-u+a)du\\
&=\displaystyle\frac{1}{\Gamma(1-\a)}\left[\frac{x(a)}{(t-a)^{\a}}
+\int_a^t \frac{d}{dt}\left[ (u-a)^{-\alpha(t)}x(t-u+a)\right]du\right]\\
&=\displaystyle\frac{1}{\Gamma(1-\a)}\left[\frac{x(a)}{(t-a)^{\a}}\right.\\
&\ +\DS\int_a^t \left[ -\alpha^{(1)}(t)(u-a)^{-\alpha(t)}
\ln(u-a)x(t-u+a)\left.\DS+(u-a)^{-\alpha(t)}x^{(1)}(t-u+a)\right]du\right]\\
&=S_1(t)-S_2(t)
\end{split}
\end{equation*}
with
\begin{equation}
\label{eq:S1:proof}
S_1(t) = \DS\frac{1}{\Gamma(1-\a)}\left[\frac{x(a)}{(t-a)^{\a}}+\int_a^t (t-\t)^{-\alpha(t)}x^{(1)}(\t)d\t\right]
\end{equation}
and
\begin{equation}
\label{eq:S2:proof}
S_2(t)=\DS\frac{\alpha^{(1)}(t)}{\Gamma(1-{\a})}\int_a^t (t-\t)^{-\alpha(t)}\ln(t-\t)x(\t)d\t.
\end{equation}
The equivalence between \eqref{eq:S1:proof} and \eqref{eq:S1} follows
from the computations of \cite{Pooseh1}. To show the equivalence
between \eqref{eq:S2:proof} and \eqref{eq:S2}
we start in the same way as done in \cite{Atanackovic1}, to get
\begin{equation*}
\begin{split}
S_2(t)&=\DS\frac{\alpha^{(1)}(t)}{\Gamma(1-{\a})}\left[x(t)\int_a^t (t-u)^{-\alpha(t)}\ln(t-u)du\right.\\
&\DS-\left.\int_a^t x^{(1)}(\t)\left(\int_a^\t (t-u)^{-\alpha(t)}\ln(t-u)du\right)d\t\right]\\
&=\DS\frac{\alpha^{(1)}(t)}{\Gamma(1-{\a})}\left[x(t)(t-a)^{1-\alpha(t)}\left[
\frac{\ln(t-a)}{1-\a}-\frac{1}{(1-\a)^2}\right]\right.\\
&\DS-\left.\int_a^t x^{(1)}(\t)\left(\int_a^\t (t-a)^{-\alpha(t)}\left(1-\frac{u-a}{t-a}\right)^{-\alpha(t)}\left[
\ln(t-a)+\ln \left(1-\frac{u-a}{t-a}\right)  \right]du\right)d\t\right].
\end{split}
\end{equation*}
Now, applying Taylor's expansion over
$\DS \left(1-\frac{u-a}{t-a}\right)^{-\alpha(t)}$
and $\DS \ln \left(1-\frac{u-a}{t-a}\right)$, we deduce that
\begin{equation*}
\begin{split}
S_2(t)&=\DS\frac{\alpha^{(1)}(t)}{\Gamma(1-{\a})}\left[
x(t)(t-a)^{1-\alpha(t)}\left[\frac{\ln(t-a)}{1-\a}-\frac{1}{(1-\a)^2}\right]\right.\\
&\quad \DS-\int_a^t x^{(1)}(\t)\left(\int_a^\t (t-a)^{-\alpha(t)}\ln(t-a)
\sum_{k=0}^N \C (-1)^k \frac{(u-a)^k}{(t-a)^k}du\right.\\
&\quad \DS\left.\left. -\int_a^\t (t-a)^{-\alpha(t)}\sum_{k=0}^N \C (-1)^k
\frac{(u-a)^k}{(t-a)^k} \, \sum_{p=1}^N \frac{1}{p}
\frac{(u-a)^p}{(t-a)^p} du \right)d\t\right]+E_{2,N}(t)
\end{split}
\end{equation*}
\begin{equation*}
\begin{split}
&=\DS\frac{\alpha^{(1)}(t)}{\Gamma(1-{\a})}\left[x(t)(t-a)^{1-\alpha(t)}\left[
\frac{\ln(t-a)}{1-\a}-\frac{1}{(1-\a)^2}\right]\right.\\
&\quad \DS-\int_a^t x^{(1)}(\t)(t-a)^{-\alpha(t)}\ln(t-a)
\sum_{k=0}^N \C \frac{(-1)^k}{(t-a)^k} \left(\int_a^\t (u-a)^k \, du\right)d\t\\
&\quad \DS\left. +\int_a^t x^{(1)}(\t) (t-a)^{-\alpha(t)}\sum_{k=0}^N \C
\frac{(-1)^k}{(t-a)^k} \, \sum_{p=1}^N \frac{1}{p(t-a)^p}
\left(\int_a^\t (u-a)^{k+p} \, du\right)\,d\t\right]+E_{2,N}(t)\\
&=\DS\frac{\alpha^{(1)}(t)(t-a)^{-\alpha(t)}}{\Gamma(1-{\a})}\left[
x(t)(t-a)\left[\frac{\ln(t-a)}{1-\a}-\frac{1}{(1-\a)^2}\right]\right.\\
&\quad \DS-\ln(t-a) \sum_{k=0}^N \C \frac{(-1)^k}{(t-a)^k(k+1)}
\left(\int_a^t x^{(1)}(\t) (\t-a)^{k+1} \, d\t\right)\\
&\quad \DS\left. +\sum_{k=0}^N \C \frac{(-1)^k}{(t-a)^k} \,
\sum_{p=1}^N \frac{1}{p(t-a)^p(k+p+1)} \left(\int_a^t
x^{(1)}(\t) (\t-a)^{k+p+1} \, d\t \right)\right]+E_{2,N}(t).
\end{split}
\end{equation*}
Integrating by parts, we conclude with the two following equalities:
\begin{equation*}
\begin{split}
\DS\int_a^t x^{(1)}(\t) (\t-a)^{k+1} \, d\t
&=\DS x(t)(t-a)^{k+1}-V_{k+n+1}(t),\\
\DS\int_a^t x^{(1)}(\t) (\t-a)^{k+p+1} \, d\t
&=\DS x(t)(t-a)^{k+p+1}-V_{k+p+n+1}(t).
\end{split}
\end{equation*}
The deduction of relation \eqref{eq:S2} for $S_2(t)$ follows now from direct calculations.
To end, we prove the upper bound formula for the error.
The bound \eqref{eq:err1} for the error $E_{1,N}(t)$ at time $t$
follows easily from \cite{Pooseh1}. With respect to sum $S_2$, the error at $t$ is bounded by
\begin{multline*}
|E_{2,N}(t)|\leq \left| \frac{\alpha^{(1)}(t)(t-a)^{-\alpha(t)}}{\Gamma(1-{\a})}\right|
\times\left|-\ln(t-a) \sum_{k=N+1}^\infty \C \frac{(-1)^k}{k+1} \left(\int_a^t x^{(1)}(\t)
\frac{(\t-a)^{k+1}}{(t-a)^k} \, d\t\right)\right.\\
\left.+\sum_{k=N+1}^\infty \C (-1)^k \sum_{p=N+1}^\infty \frac{1}{p(k+p+1)}
\left(\int_a^t x^{(1)}(\t)  \frac{(\t-a)^{k+p+1}}{(t-a)^{k+p}}\, d\t \right)\right|.
\end{multline*}
Define the quantities
\begin{equation*}
I_1(t) = \DS \int_N^\infty\frac{1}{k^{1-\a}(k+1)(k+2)}\,dk
\end{equation*}
and
\begin{equation*}
I_2(t) = \DS\int_N^\infty\int_N^\infty\frac{1}{k^{1-\a}p(k+p+1)(k+p+2)}\,dp\, dk.
\end{equation*}
Inequality \eqref{eq:err2} follows from relation
$$
\left|\C\right|\leq\frac{\exp(\alpha^2(t)-\a)}{k^{1-\a}}
$$
and the upper bounds
$$
I_1(t)<\int_N^\infty\frac{1}{k^{2-\a}}\,dk=\frac{1}{(1-\a)N^{1-\a}}
$$
and
$$
I_2(t)<\int_N^\infty\int_N^\infty\frac{1}{k^{2-\a}p^2}\,dp\, dk=\frac{1}{(1-\a)N^{2-\a}}
$$
for $I_1$ and $I_2$.
\end{proof}

Similarly as done in Theorem~\ref{Mainthm}
for the left Riemann--Liouville fractional derivative,
an approximation formula can be deduced for the
right Riemann--Liouville fractional derivative:

\begin{Theorem}
\label{MainthmR}
Fix $n \in \mathbb{N}$ and $N\geq n+1$, and let $x(\cdot)\in C^{n+1}([a,b],\mathbb{R})$.
Define the (right) moment of $x$ of order $k$ by
$$
W_k(t) =(k-n)\int_t^b (b-\t)^{k-n-1}x(\t)d\t.
$$
Then,
$$
\RD x(t)=S_1(t)+S_2(t)+E_{1,N}(t)+E_{2,N}(t)
$$
with
$$
S_1(t)=(b-t)^{-\a}\left[\sum_{k=0}^{n}A(\a,k)(b-t)^{k}x^{(k)}(t)
+\sum_{k=n+1}^NB(\a,k)(b-t)^{n-k}W_k(t)\right],
$$
where
\begin{equation*}
\begin{split}
A(\a,k)&=\frac{(-1)^k}{\Gamma(k+1-\a)}\left[1
+\sum_{p=n+1-k}^N\frac{\Gamma(p-n+\a)}{\Gamma(\a-k)(p-n+k)!}\right],
\quad k = 0,\ldots,n,\\
B(\a,k)&=\frac{(-1)^{n+1}\Gamma(k-n+\a)}{\Gamma(-\a)\Gamma(1+\a)(k-n)!},
\end{split}
\end{equation*}
and
\begin{multline*}
S_2(t)=\frac{x(t)\alpha^{(1)}(t)}{\Gamma(1-\a)}(b-t)^{1-\a}
\left[\frac{\ln(b-t)}{1-\a}-\frac{1}{(1-\a)^2}\right.\\
\left.-\ln(b-t)\sum_{k=0}^N \C
\frac{(-1)^k}{k+1}+\sum_{k=0}^N\C(-1)^k\sum_{p=1}^N\frac{1}{p(k+p+1)}\right]\\
+\frac{\alpha^{(1)}(t)}{\Gamma(1-\a)}(b-t)^{1-\a}\left[\ln(b-t)
\sum_{k=n+1}^{N+n+1}\D\frac{(-1)^{k-n-1}}{k-n}(b-t)^{n-k}W_{k}(t)\right.\\
\left.-\sum_{k=n+1}^{N+n+1}\D(-1)^{k-n-1}\sum_{p=1}^N\frac{1}{p(k+p-n)}(b-t)^{n-k-p} W_{k+p}(t)\right].
\end{multline*}
The error of the approximation $\RD x(t) \approx S_1(t)+S_2(t)$
is given by $E_{1,N}(t)+E_{2,N}(t)$, where $E_{1,N}(t)$
and $E_{2,N}(t)$ are bounded by
$$
|E_{1,N}(t)|\leq L_{n+1}(t)\frac{\exp((n-\a)^2+n-\a)}{\Gamma(n+1-\a)(n-\a)N^{n-\a}}(b-t)^{n+1-\a}
$$
and
$$
|E_{2,N}(t)|\leq  \frac{L_1(t) \left|\alpha^{(1)}(t)\right|(b-t)^{2-\alpha(t)}
\exp(\alpha^2(t)-\a)}{\Gamma(2-{\a}){N^{1-\a}}}
\left[\left|\ln(b-t)\right|+\frac{1}{N}\right]
$$
with
$$
L_{j}(t)=\displaystyle\max_{\t \in [a,t]}\left|x^{(j)}(\t)\right|,
\quad j\in\{1,n+1\}.
$$
\end{Theorem}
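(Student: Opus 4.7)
The plan is to mirror the proof of Theorem~\ref{Mainthm}, exploiting the near-symmetry between left and right Riemann--Liouville derivatives, while carefully tracking the sign changes caused by the reflection of the interval and the extra minus sign in the definition of $\RD$. First I would start from
$$
\RD x(t)=\frac{-1}{\Gamma(1-\a)}\frac{d}{dt}\int_t^b (\t-t)^{-\a}x(\t)d\t,
$$
and apply the change of variable $\t-t=b-u$ (so that the integral becomes $\int_t^b (b-u)^{-\alpha(t)} x(t+b-u)\,du$). Differentiating in $t$ then produces a boundary term $x(b)/(b-t)^{\alpha(t)}$ together with an integrand containing two pieces: one with $(b-u)^{-\alpha(t)} x^{(1)}(t+b-u)$ and one with $-\alpha^{(1)}(t) (b-u)^{-\alpha(t)}\ln(b-u)\,x(t+b-u)$. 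After the overall minus sign in the definition is absorbed, reversing the change of variable puts this into the form $\RD x(t) = S_1(t) + S_2(t)$, where
\begin{equation*}
S_1(t)=\frac{1}{\Gamma(1-\a)}\left[\frac{x(b)}{(b-t)^{\a}}-\int_t^b (\t-t)^{-\alpha(t)}x^{(1)}(\t)d\t\right],
\end{equation*}
\begin{equation*}
S_2(t)=\frac{\alpha^{(1)}(t)}{\Gamma(1-\a)}\int_t^b (\t-t)^{-\alpha(t)}\ln(\t-t)\,x(\t)d\t.
\end{equation*}
The additive $+S_2$ (instead of $-S_2$) in the statement reflects the extra minus sign in the definition of $\RD$.

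Next, for $S_1$ I invoke the computations of \cite{Pooseh1} adapted to the right-hand interval, yielding the stated closed form in terms of $A(\a,k)$, $B(\a,k)$, $x^{(k)}(t)$ and the right moments $W_k(t)$. The sign factors $(-1)^k$ and $(-1)^{n+1}$ in the coefficients emerge naturally here: after integrating by parts on $\int_t^b (\t-t)^{-\a} x^{(1)}(\t)d\t$ and then iteratively using the Taylor series $(1-s)^{-\alpha(t)}=\sum_k \C (-1)^k s^k$ with $s=(b-\tau)/(b-t)$, each power $(\t-t)^j = ((b-t)-(b-\t))^j$ must be expanded, and these binomial expansions flip the signs exactly as in the left case but with an additional reflection giving the $(-1)^k$ and $(-1)^{n+1}$ prefactors shown.

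For $S_2$, I would follow the computation in the proof of Theorem~\ref{Mainthm} verbatim, simply substituting $(b-t)$ for $(t-a)$, $\ln(b-t)$ for $\ln(t-a)$, and $W_k(t)$ for $V_k(t)$: first integrate by parts to split off the boundary term $x(t)(b-t)^{1-\a}[\ln(b-t)/(1-\a)-1/(1-\a)^2]$; then apply the Taylor expansions of $(1-\tfrac{b-u}{b-t})^{-\alpha(t)}$ and $\ln(1-\tfrac{b-u}{b-t})$; and finally reduce the resulting iterated integrals $\int_t^b x^{(1)}(\t)(b-\t)^{k+1}d\t$ to moments $W_{k+n+1}(t)$ by a further integration by parts. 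The error estimates come from the tails of the Taylor series exactly as before, using $|\C|\le \exp(\alpha^2(t)-\a)/k^{1-\a}$ and bounding the residual integrals $I_1$, $I_2$ by the same comparisons.

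The main obstacle I expect is bookkeeping: keeping track of every sign introduced by (i) the $-d/dt$ outside the integral, (ii) the reflection $\t\mapsto b-u+t$ reversing orientation, and (iii) the expansion of $(b-\t)^{k}$ in powers of $(b-t)$. Once these signs are consolidated, the proof is a mechanical transcription of the left-hand case and the error bounds follow by the same monotone comparisons, the only change being the replacement of $(t-a)$ by $(b-t)$ in every estimate.
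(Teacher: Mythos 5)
Your proposal is correct and follows exactly the route the paper intends: the paper omits the proof of this theorem entirely, remarking only that it is obtained ``similarly'' to Theorem~\ref{Mainthm}, and your reduction via the substitution $\t-t=b-u$ to $\RD x(t)=S_1(t)+S_2(t)$ with the boundary term $x(b)/(b-t)^{\a}$, the sign flip giving $+S_2$, and the subsequent transcription of the left-hand computation is precisely that argument. The only caveat is bookkeeping you already flag yourself (the $(-1)^k$ and $(-1)^{n+1}$ factors from the reflected integrations by parts), which you identify correctly.
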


Using the techniques presented in \cite{Pooseh0},
similar formulas as the ones given by Theorem~\ref{Mainthm}
and Theorem~\ref{MainthmR} can be proved for the left
and right Riemann--Liouville fractional integrals
of order $\alpha(\cdot)$. For example,
for the left fractional integral one has the following result.

\begin{Theorem}
\label{thm:RL:LFI}
Fix $n \in \mathbb{N}$ and $N\geq n+1$,
and let $x(\cdot)\in C^{n+1}([a,b],\mathbb{R})$. Then,
$$
\LI x(t)=(t-a)^{\a}\left[\sum_{k=0}^{n}A(\a,k)(t-a)^{k} x^{(k)}(t)
+\sum_{k=n+1}^N B(\a,k)(t-a)^{n-k}V_k(t)\right]+E_{N}(t),
$$
where
\begin{equation*}
\begin{split}
A(\a,k)&= \frac{1}{\Gamma(k+1+\a)}\left[1
+\sum_{p=n+1-k}^N\frac{\Gamma(p-n-\a)}{\Gamma(-\a-k)(p-n+k)!}\right],
\quad k = 0, \ldots, n,\\
B(\a,k) &= \frac{\Gamma(k-n-\a)}{\Gamma(\a)\Gamma(1-\a)(k-n)!},\\
V_k(t)&=\DS (k-n) \int_a^t (\tau-a)^{k-n-1}x(\tau)d\tau, \quad k=n+1,\ldots
\end{split}
\end{equation*}
A bound for the error $E_{N}(t)$ is given by
$$
|E_{N}(t)|\leq L_{n+1}(t)\frac{\exp((n+\a)^2+n+\a)}{\Gamma(n+1+\a)(n+\a)
N^{n+\a}}(t-a)^{n+1+\a}.
$$
\end{Theorem}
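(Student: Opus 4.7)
The first observation is that, unlike in Theorem~\ref{Mainthm}, the definition of $\LI x(t)$ involves no differentiation with respect to $t$, so no $\alpha^{(1)}(t)$ or logarithmic terms can appear. The variable order $\a$ behaves throughout as a parameter that is frozen at the evaluation point, and the calculation reduces essentially to the constant-order scheme of \cite{Pooseh0} with $\alpha$ replaced by $\a$. This explains the structural parallel with Theorem~\ref{Mainthm}: the $S_1$-type bracket survives (with $-\a$ swapped for $+\a$ in the exponents and Gamma arguments), while the logarithmic $S_2$ block is absent.

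The plan is to first integrate by parts $n+1$ times in $\int_a^t (t-\tau)^{\a-1}x(\tau)\,d\tau$, each step transferring one derivative from $x$ onto the kernel and raising its exponent by $1$. Since $\a+k>0$, the boundary contributions at $\tau=t$ all vanish, while at $\tau=a$ one collects the sum $\sum_{k=0}^n (t-a)^{\a+k}x^{(k)}(a)/\Gamma(\a+k+1)$, leaving the single remainder integral $\frac{1}{\Gamma(\a+n+1)}\int_a^t (t-\tau)^{\a+n}x^{(n+1)}(\tau)\,d\tau$ with a now-regular kernel. Into this remainder I would substitute the binomial expansion of $(t-\tau)^{\a+n}=(t-a)^{\a+n}\bigl(1-(\tau-a)/(t-a)\bigr)^{\a+n}$, exchange sum and integral, and then integrate each resulting $\int_a^t(\tau-a)^p x^{(n+1)}(\tau)\,d\tau$ by parts: $n+1$ times when $p\ge n+1$, which produces only $x^{(j)}(t)$ boundary terms together with the moment $V_p(t)=(p-n)\int_a^t(\tau-a)^{p-n-1}x(\tau)\,d\tau$; and $p+1$ times when $p\le n$, in which case additional $x^{(n-p)}(a)$ contributions also appear.

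The main obstacle I expect is the bookkeeping showing that these residual $x^{(j)}(a)$ contributions from the $p\le n$ part of the binomial expansion cancel exactly against the $x^{(k)}(a)$ terms from the initial $n+1$ integrations by parts, leaving a clean linear combination of $x^{(k)}(t)$ and $V_k(t)$ whose coefficients reassemble into the closed forms $A(\a,k)$ and $B(\a,k)$ stated in the theorem. Truncating the binomial series at $p=N$ defines $E_N(t)$. For the bound I would use $\bigl|\binom{\a+n}{p}\bigr|\le\exp((\a+n)^2+\a+n)/p^{1+n+\a}$, the estimate already exploited in the proof of Theorem~\ref{Mainthm}, together with the crude bound $\bigl|\int_a^t(\tau-a)^p x^{(n+1)}(\tau)\,d\tau\bigr|\le L_{n+1}(t)(t-a)^{p+1}/(p+1)$; the tail then collapses to a convergent integral dominated by $\int_N^\infty p^{-(n+\a+1)}\,dp=N^{-(n+\a)}/(n+\a)$, yielding the advertised $(t-a)^{n+1+\a}N^{-(n+\a)}$ decay.
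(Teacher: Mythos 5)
Your proposal is correct and follows precisely the route the paper intends: the paper offers no proof of this theorem, deferring to the techniques of \cite{Pooseh0}, and your scheme --- $n+1$ integrations by parts with vanishing boundary terms at $\tau=t$ since $\a+k>0$, binomial expansion of the regularized kernel $(t-\tau)^{n+\a}$, conversion of the residual integrals into $x^{(k)}(t)$ terms and moments $V_k(t)$ with the $x^{(k)}(a)$ contributions cancelling, and the tail estimate via $\bigl|\binom{n+\a}{p}\bigr|\leq \exp((n+\a)^2+n+\a)\,p^{-(n+1+\a)}$ --- is exactly that technique with the order frozen at $t$, and it reproduces the stated error bound. Your opening observation, that no $\alpha^{(1)}(t)$ or logarithmic terms can arise because no differentiation in $t$ acts on the kernel, correctly accounts for the absence of an $S_2$ block here.
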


We now focus our attention to the left Marchaud fractional derivative $\LDM  x(t)$.
Splitting the integral \eqref{eq:LMD}, we deduce that
$$
\LDM  x(t)=- \frac{\a}{\Gamma(1-\a)}
\int_a^t \frac{x(\t)}{(t-\t)^{1+\alpha(t)}}d\t.
$$
Integrating by parts,
\begin{equation}
\label{marchaurepre}
\LDM x(t)=\frac{1}{\Gamma(1-\a)}\left[
\frac{x(a)}{(t-a)^\a}+\int_a^t (t-\tau)^{-\a}x^{(1)}(\tau)d\tau \right],
\end{equation}
which is a representation for the left Riemann--Liouville fractional derivative
when the order is constant, that is, $\a \equiv \alpha$ \cite[Lemma~2.12]{samko0}.
For this reason, the Marchaud fractional derivative is more suitable as the inverse operation
for the Riemann--Liouville fractional integral. With Eq.~\eqref{marchaurepre}
and Theorem~\ref{Mainthm} in mind, it is not difficult
to obtain the corresponding formula for $\LDM  x(t)$.

\begin{Theorem}
\label{MainthmM}
Fix $n \in \mathbb{N}$ and $N\geq n+1$, and let
$x(\cdot)\in C^{n+1}([a,b],\mathbb{R})$. Then,
$$
\LDM x(t)=S_1(t)+E_{1,N}(t),
$$
where $S_1(t)$ and $E_{1,N}(t)$ are as in Theorem~\ref{Mainthm}.
\end{Theorem}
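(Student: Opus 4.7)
The plan is to reduce Theorem~\ref{MainthmM} entirely to the $S_1$ part of Theorem~\ref{Mainthm}. First I would start from the definition \eqref{eq:LMD} of the left Marchaud derivative and, following the manipulation already performed in the excerpt preceding the theorem statement, split the singular kernel and integrate by parts to land at the representation \eqref{marchaurepre}, namely
\begin{equation*}
\LDM x(t)=\frac{1}{\Gamma(1-\a)}\left[\frac{x(a)}{(t-a)^\a}+\int_a^t(t-\tau)^{-\a}x^{(1)}(\tau)\,d\tau\right].
\end{equation*}

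The next step is to observe that the right-hand side above is literally the expression \eqref{eq:S1:proof} that appeared as the $S_1(t)$ summand in the proof of Theorem~\ref{Mainthm}. The conceptual point is that, in the Riemann--Liouville case, $S_2(t)$ arose from the $\alpha^{(1)}(t)$ produced by differentiating the kernel $(u-a)^{-\alpha(t)}$ in $t$ under the integral sign; in the Marchaud case, however, the factor $(t-\tau)^{-\a}$ is already the output of the integration by parts and no such $t$-differentiation of the kernel is needed. Hence no counterpart of $S_2(t)$ is generated, which is why the conclusion involves only $S_1$ and $E_{1,N}$.

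From this point on, the argument is a verbatim copy of the first half of the proof of Theorem~\ref{Mainthm}: expand $(t-\tau)^{-\a}=(t-a)^{-\a}\bigl(1-\tfrac{\tau-a}{t-a}\bigr)^{-\a}$ using the generalized binomial series truncated at $N$, interchange the finite sum with the integral, integrate by parts to produce the moments $V_k(t)$, and rearrange the terms to match the closed form \eqref{eq:S1} with coefficients $A(\a,k)$ and $B(\a,k)$ exactly as in Theorem~\ref{Mainthm}. The tail $k\geq N+1$ of the binomial series yields the remainder $E_{1,N}(t)$, and the bound \eqref{eq:err1} follows from the same estimate $|\C|\leq\exp(\alpha^2(t)-\a)/k^{1-\a}$ used in \cite{Pooseh1}.

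The main obstacle is essentially bookkeeping rather than novelty: one must verify that the Marchaud representation \eqref{marchaurepre} is \emph{identical in form} to \eqref{eq:S1:proof}, so that the expansion machinery of Theorem~\ref{Mainthm} transfers without modification and, in particular, that no extra term involving $\alpha^{(1)}(t)$ is hidden inside it. Once this identification is made explicit, the statement follows immediately by invoking the computations already carried out in the proof of Theorem~\ref{Mainthm}.
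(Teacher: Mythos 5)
Your proposal is correct and follows essentially the same route as the paper: the paper likewise splits the Marchaud integral, integrates by parts to reach the representation \eqref{marchaurepre}, identifies it with the expression \eqref{eq:S1:proof} for $S_1(t)$ from the proof of Theorem~\ref{Mainthm}, and concludes that no $S_2$ term arises because no differentiation of the kernel with respect to $t$ is performed. Your additional remark explaining \emph{why} the $\alpha^{(1)}(t)$ term is absent is a useful clarification but does not change the argument.
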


Similarly, having into consideration that
$$
\RDM x(t)=\frac{1}{\Gamma(1-\a)}\left[ \frac{x(b)}{(b-t)^\a}
-\int_t^b (\t-t)^{-\a}x^{(1)}(\tau)d\tau \right],
$$
the following result holds.

\begin{Theorem}
Fix $n \in \mathbb{N}$ and $N\geq n+1$, and let
$x(\cdot)\in C^{n+1}([a,b],\mathbb{R})$. Then,
$$
\RDM x(t)=S_1(t)+E_{1,N}(t),
$$
where $S_1(t)$ and $E_{1,N}(t)$ are as in Theorem~\ref{MainthmR}.
\end{Theorem}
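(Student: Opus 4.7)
The plan is to proceed exactly as in the passage from Theorem~\ref{Mainthm} to Theorem~\ref{MainthmM}, transplanted to the right-sided setting. The starting point is the identity displayed just before the statement,
\[
\RDM x(t)=\frac{1}{\Gamma(1-\a)}\left[\frac{x(b)}{(b-t)^{\a}}-\int_t^b (\t-t)^{-\a}x^{(1)}(\t)\,d\t\right].
\]
The key observation is that this expression carries no outer $d/dt$: the power $-\a$ already sits directly on the difference $\t-t$. Consequently, the step in the proof of Theorem~\ref{MainthmR} that produces the $\alpha^{(1)}(t)$-dependent term $S_2(t)$ (namely, differentiating $(\t-t)^{-\alpha(t)}$ in $t$ and picking up $\ln(\t-t)\,\alpha^{(1)}(t)$) simply does not occur here, so only an analogue of the $S_1(t)$ piece and the corresponding error $E_{1,N}(t)$ will survive.

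Next, I would carry out the same change of variable $\t-t=b-u$ and Taylor expansion used in the right-sided case of Theorem~\ref{MainthmR}. After the substitution the integral becomes $\int_t^b (b-u)^{-\a}x^{(1)}(t+b-u)\,du$, and expanding $\bigl(1-(b-u)/(b-t)\bigr)^{-\a}$ in powers of $(b-u)/(b-t)$ using the generalized binomial coefficients $\C$, truncated at order $N$, yields a finite sum of integrals of the form $\int_t^b (b-u)^{k}x^{(1)}\,du$. Integrating each of these by parts, exactly as at the end of the proof of Theorem~\ref{Mainthm}, converts them into combinations of $x(t)$ and the right moments $W_k(t)$. Regrouping into a polynomial part of degree $n$ in the $x^{(k)}(t)$ and a tail indexed by $W_k(t)$ for $n+1\le k\le N$ reproduces precisely the coefficients $A(\a,k)$ and $B(\a,k)$ of Theorem~\ref{MainthmR}, with the sign factors $(-1)^k$ and $(-1)^{n+1}$ appearing automatically from the reversed orientation of the right-sided operator.

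The error bound for $E_{1,N}(t)$ follows by the same tail estimate as in the proofs of Theorems~\ref{Mainthm} and \ref{MainthmR}: the binomial remainder is controlled via $|\C|\le \exp(\alpha^2(t)-\a)/k^{1-\a}$, the integrand $|x^{(n+1)}|$ is replaced by $L_{n+1}(t)$ on $[t,b]$, and the residual series is majorized by $\int_N^\infty k^{\a-n-1}\,dk$ to recover the stated $N^{-(n-\a)}$ decay. The only real obstacle I anticipate is bookkeeping: tracking the $(-1)^k$ produced by the right-sided binomial expansion together with the $(-1)^{n+1}$ produced by integrating $(b-u)^{k}$ by parts against $x^{(1)}$, and aligning the index shift between the truncated expansion and the definition of $W_k(t)$. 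No new analytical ingredient beyond those already deployed for Theorems~\ref{Mainthm}, \ref{MainthmR} and \ref{MainthmM} is required.
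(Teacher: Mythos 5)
Your proposal is correct and takes essentially the same route as the paper: both start from the displayed integral representation of $\RDM x(t)$ and observe that, since no outer $d/dt$ remains, no $S_2$-type term (and no $E_{2,N}$) is generated, so the result is exactly the $S_1(t)+E_{1,N}(t)$ part already established in Theorem~\ref{MainthmR}. The paper simply invokes that theorem at this point rather than re-running the binomial expansion and tail estimate, but the content is the same.
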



\section{Examples}
\label{sec:example}

For illustrative purposes, we consider
the left Riemann--Liouville fractional integral
and the left Riemann--Liouville and Marchaud fractional derivatives
of order $\a=(t+1)/4$. Similar results as the ones presented here
are easily obtained for the other fractional operators and for other
functions $\alpha(\cdot)$. All computations were done using the
Computer Algebra System \textsf{Maple}.


\subsection{Test function}

We test the accuracy of our approximations with an example.

\begin{ex}
Let $x$ be the function $x(t)=t^4$ with $t\in[0,1]$. Then, for $\a=(t+1)/4$,
it follows from Example~\ref{ex1} that
\begin{equation}
\label{ex1:LFI}
{_0I_t^{\a}} x(t)=\frac{24}{\Gamma(\frac{t+21}{4})}t^{\frac{t+17}{4}},
\end{equation}
\begin{equation}
\label{ex1:LFD}
{_0D_t^{\a}} x(t)=\frac{24}{\Gamma(\frac{19-t}{4})}t^{\frac{15-t}{4}}
-\frac{6}{\Gamma(\frac{23-t}{4})}t^{\frac{19-t}{4}} \left[\ln(t)
-\psi\left(\frac{23-t}{4}\right)+\psi\left(\frac{3-t}{4}\right)\right],
\end{equation}
and
\begin{equation}
\label{ex1:LFD:M}
{_0\mathbb{D}_t^{\a}} x(t)
=\frac{24}{\Gamma(\frac{19-t}{4})}t^{\frac{15-t}{4}}.
\end{equation}
In Figures~\ref{fig:1}, \ref{fig:2} and \ref{fig:3} one can compare the exact expressions
of the fractional operators of variable order \eqref{ex1:LFI}, \eqref{ex1:LFD} and \eqref{ex1:LFD:M},
respectively, with the approximations obtained from our results of Section~\ref{sec:theorems}
with $n=2$ and $N \in \{3,5\}$. The error $E$ is measured using the norm
\begin{equation}
\label{eq:err}
E(f,g)(t)=\sqrt{\int_0^1 (f(t)-g(t))^2\,dt}.
\end{equation}
\begin{figure}[!ht]
\begin{center}
\psfrag{x\(t\)=t^4}{\hspace*{-0.5cm}\tiny ${_0I_t^{\a}} x(t)$}
\includegraphics[scale=0.4,angle=-90]{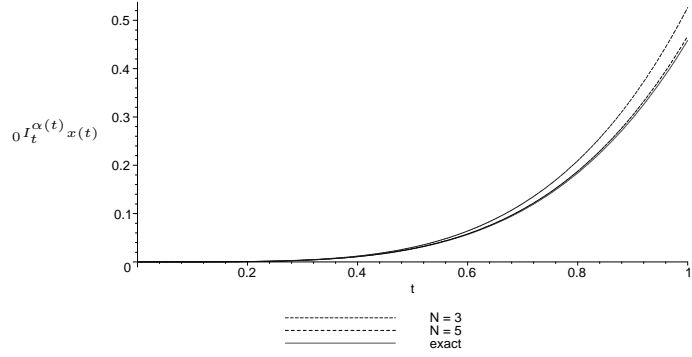}\\
\caption{Exact \eqref{ex1:LFI} and numerical approximations of the left Riemann--Liouville integral
${_0I_t^{\a}} x(t)$ with $x(t)=t^4$ and $\a=(t+1)/4$ obtained from Theorem~\ref{thm:RL:LFI}
with $n=2$ and $N \in \{3,5\}$. The error \eqref{eq:err} is $E\approx 0.02169$
for $N=3$ and $E\approx 0.00292$ for $N=5$.}
\label{fig:1}
\end{center}
\end{figure}
\begin{figure}[!ht]
\begin{center}
\psfrag{x\(t\)=t^4}{\hspace*{-0.5cm}\tiny ${_0D_t^{\a}} x(t)$}
\includegraphics[scale=0.4,angle=-90]{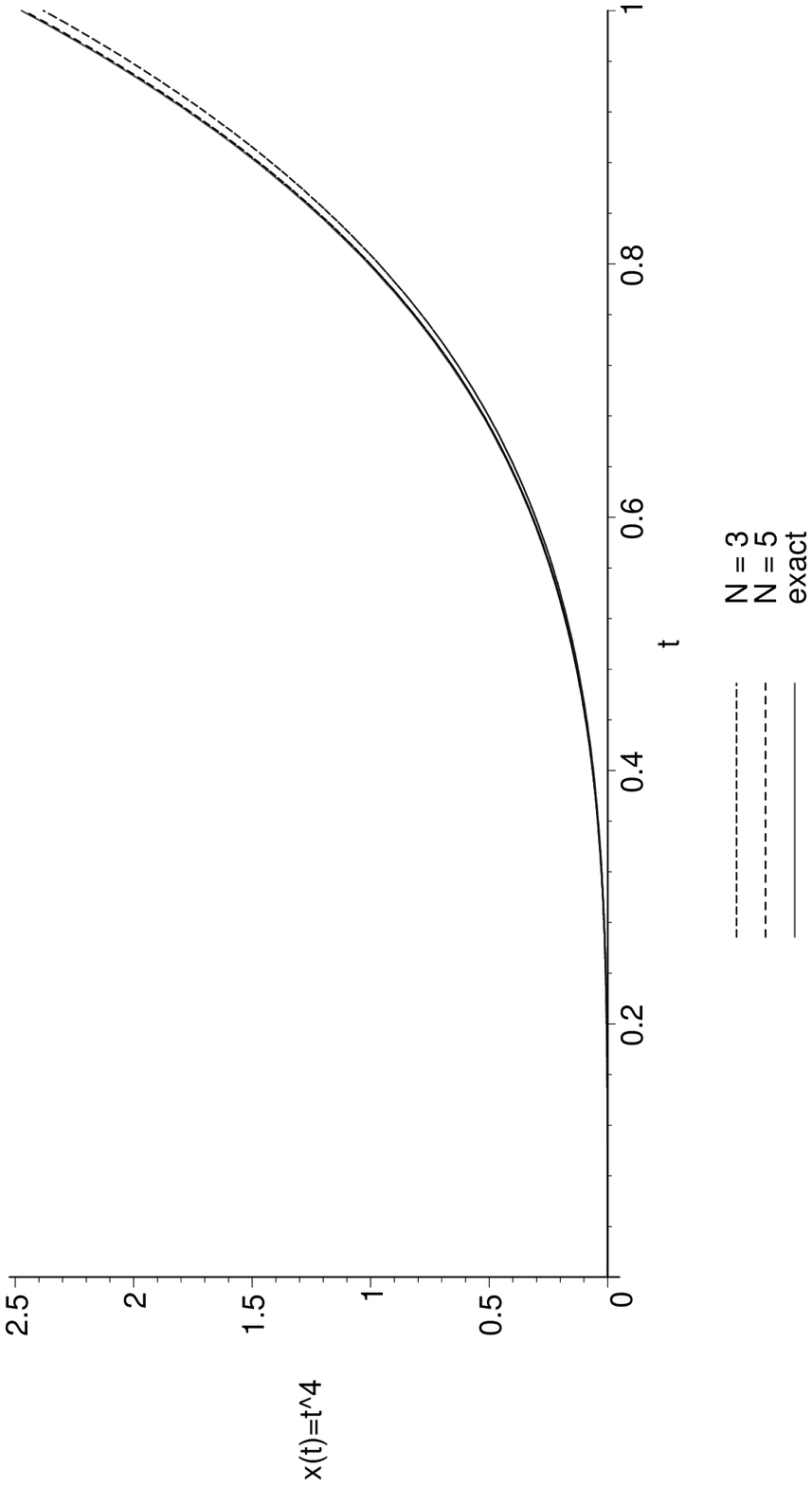}\\
\caption{Exact \eqref{ex1:LFD} and numerical approximations of the left Riemann--Liouville derivative
${_0D_t^{\a}} x(t)$ with $x(t)=t^4$ and $\a=(t+1)/4$ obtained from Theorem~\ref{Mainthm}
with $n=2$ and $N \in \{3,5\}$. The error \eqref{eq:err} is $E\approx 0.03294$ for $N=3$
and $E\approx 0.003976$ for $N=5$.}
\label{fig:2}
\end{center}
\end{figure}
\begin{figure}[!ht]
\begin{center}
\psfrag{x\(t\)=t^4}{\hspace*{-0.5cm}\tiny ${_0\mathbb{D}_t^{\a}} x(t)$}
\includegraphics[scale=0.4,angle=-90]{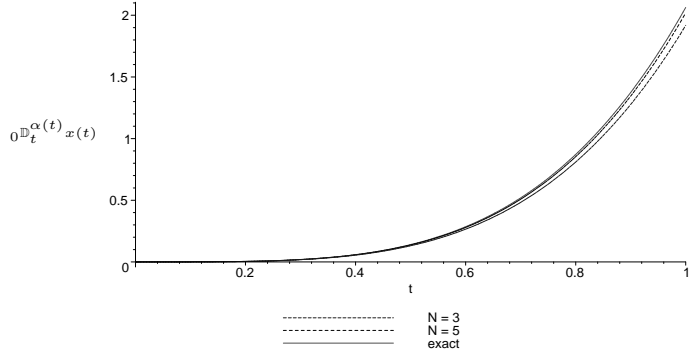}\\
\caption{Exact \eqref{ex1:LFD:M} and numerical approximations of the left Marchaud derivative
${_0\mathbb{D}_t^{\a}} x(t)$ with $x(t)=t^4$ and $\a=(t+1)/4$ obtained
from Theorem~\ref{MainthmM} with $n=2$ and $N \in \{3,5\}$.
The error \eqref{eq:err} is $E \approx 0.04919$ for $N=3$
and $E \approx 0.01477$ for $N=5$.}
\label{fig:3}
\end{center}
\end{figure}
\end{ex}


\subsection{Fractional differential equations of variable order}
\label{sec:FDEVO}

Consider the following fractional differential equation of variable order:
\begin{equation}
\label{eq:FDE:VO}
\begin{cases}
{_0\mathbb{D}_t^{\a}} x(t)+x(t)=\frac{1}{\Gamma(\frac{7-t}{4})}t^{\frac{3-t}{4}}+t,\\
x(0)=0,
\end{cases}
\end{equation}
with $\a=(t+1)/4$. It is easy to check that $\overline{x}(t)=t$ is a solution to \eqref{eq:FDE:VO}.
We exemplify how our Theorem~\ref{MainthmM}
may be applied in order to approximate the solution
of such type of problems. The main idea
is to replace all the fractional operators that appear in the differential
equation by a finite sum up to order $N$, involving integer derivatives only,
and, by doing so, to obtain a new system of standard ordinary differential equations
that is an approximation of the initial fractional variable order problem.
As the size of $N$ increases, the solution of the new system converges to the solution
of the initial fractional system. The procedure for \eqref{eq:FDE:VO} is the following.
First, we replace ${_0\mathbb{D}_t^{\a}} x(t)$ by
$$
{_0\mathbb{D}_t^{\a}} x(t) \approx  A(\a,N)t^{-\a}x(t)
+B(\a,N)t^{1-\a}x^{(1)}(t)+\sum_{k=2}^N C(\a,k)t^{1-k-\a}V_k(t),
$$
where
\begin{equation*}
\begin{split}
A(\a,N)&=\frac{1}{\Gamma(1-\a)}\left[1+\sum_{p=2}^N\frac{\Gamma(p-1+\a)}{\Gamma(\a)(p-1)!}\right],\\
B(\a,N)&=\frac{1}{\Gamma(2-\a)}\left[1+\sum_{p=1}^N\frac{\Gamma(p-1+\a)}{\Gamma(\a-1)p!}\right],\\
C(\a,k)&=\frac{\Gamma(k-1+\a)}{\Gamma(-\a)\Gamma(1+\a)(k-1)!},
\end{split}
\end{equation*}
and ${V}_k(t)$ is the solution of the system
\begin{equation*}
\begin{cases}
V^{(1)}_k(t)=(k-1)t^{k-2}x(t)\\
V_k(0)=0, \qquad k=2,3,\ldots,N.
\end{cases}
\end{equation*}
Thus, we get the approximated system of ordinary differential equations
\begin{equation}
\label{eq:stODE:syst}
\begin{cases}
\left[A(\a,N)t^{-\a}+1\right]x(t)+B(\a,N)t^{1-\a}x^{(1)}(t)
+\sum_{k=2}^N C(\a,k)t^{1-k-\a}V_k(t)\\
\quad =\frac{1}{\Gamma(\frac{7-t}{4})}t^{\frac{3-t}{4}}+t,\\
V^{(1)}_k(t)=(k-1)t^{k-2}x(t),  \qquad k=2,3,\ldots,N,\\
x(0)=0, \\
V_k(0)=0, \qquad k=2,3,\ldots,N.
\end{cases}
\end{equation}
Now we apply any standard technique to solve the system
of ordinary differential equations \eqref{eq:stODE:syst}.
We used the command \textsf{dsolve} of \textsf{Maple}.
In Figure~\ref{fig:4} we find the graph of the approximation
$\tilde{x}_3(t)$ to the solution of problem \eqref{eq:FDE:VO},
obtained solving \eqref{eq:stODE:syst} with $N=3$.
The Table~\ref{tab1} gives some numerical values of such approximation,
illustrating numerically the fact that the approximation $\tilde{x}_3(t)$
is already very close to the exact solution $\overline{x}(t)=t$ of \eqref{eq:FDE:VO}.
In fact the plot of $\tilde{x}_3(t)$ in Figure~\ref{fig:4} is visually
indistinguishable from the plot of $\overline{x}(t)=t$.
\begin{figure}[!ht]
\begin{center}
\psfrag{x\(t\)}{\tiny $\tilde{x}_3(t)$}
\includegraphics[scale=0.4,angle=-90]{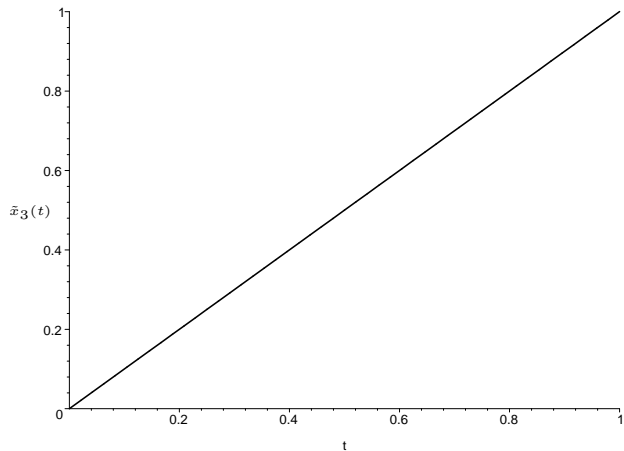}\\
\caption{Approximation $\tilde{x}_3(t)$ to the exact solution
$\overline{x}(t)=t$ of the fractional differential equation \eqref{eq:FDE:VO},
obtained from the application of Theorem~\ref{MainthmM}, that is,
obtained solving \eqref{eq:stODE:syst} with $N = 3$.}
\label{fig:4}
\end{center}
\end{figure}
\begin{table}[!ht]
$$
\begin{array}{|c|c|c|c|c|c|c|}\hline
t & 0.2 & 0.4 & 0.6 & 0.8 & 1\\
\hline
\tilde{x}_3(t) & 0.20000002056 & 0.40000004031 & 0.60000009441 & 0.80000002622 & 1.0000001591\\
\hline
\end{array}
$$
\caption{Some numerical values of the solution $\tilde{x}_3(t)$
of \eqref{eq:stODE:syst} with $N = 3$, very close to the
values of the solution $\overline{x}(t)=t$ of the fractional
differential equation of variable order \eqref{eq:FDE:VO}.}
\label{tab1}
\end{table}


\subsection{Fractional variational calculus of variable order}

We now exemplify how the expansions obtained in Section~\ref{sec:theorems}
are useful to approximate solutions of fractional problems
of the calculus of variations \cite{Malinowska}.
The fractional variational calculus of variable order
is a recent subject under strong current development \cite{Od0,Od1,Od2,MyID:268}.
So far, only analytical methods to solve fractional problems
of the calculus of variations of variable order
have been developed in the literature,
which consist in the solution of fractional Euler--Lagrange
differential equations of variable order \cite{Od0,Od1,Od2,MyID:268}.
In most cases, however, to solve analytically such fractional differential
equations is extremely hard or even impossible, so numerical/approximating
methods are needed. Our results provide two approaches to this issue.
The first was already illustrated in Section~\ref{sec:FDEVO} and
consists in approximating the necessary optimality conditions proved in
\cite{Od0,Od1,Od2,MyID:268}, which are nothing else than
fractional differential equations of variable order.
The second approach is now considered. Similarly to Section~\ref{sec:FDEVO},
the main idea here is to replace the fractional operators of variable order
that appear in the formulation of the variational problem
by the corresponding expansion of Section~\ref{sec:theorems},
which involves only integer order derivatives. By doing it, we reduce
the original problem to a classical optimal control problem,
whose extremals are found by applying the celebrated
Pontryagin maximum principle \cite{Pontryagin}.
We illustrate this method with a concrete example. Consider the functional
\begin{equation}
\label{eq:ex:VP}
J(x)=\int_0^1\left[{_0\mathbb{D}_t^{\a}} x(t)
-\frac{1}{\Gamma(\frac{7-t}{4})}t^{\frac{3-t}{4}}\right]^2dt,
\end{equation}
with fractional order $\a=(t+1)/4$, subject to the boundary conditions
\begin{equation}
\label{eq:ex:BC}
x(0)=0, \quad x(1)=1.
\end{equation}
Since $J(x)\geq 0$ for any admissible function $x$ and taking $\overline{x}(t)=t$,
which satisfies the given boundary conditions \eqref{eq:ex:BC},
one has $J(\overline{x})=0$, we conclude that $\overline{x}$
gives the global minimum to the fractional problem of the
calculus of variations that consists in minimizing functional \eqref{eq:ex:VP}
subject to the boundary conditions \eqref{eq:ex:BC}.
The numerical procedure is now explained. Since we have two boundary conditions,
we replace ${_0\mathbb{D}_t^{\a}} x(t)$ by the expansion given in Theorem~\ref{MainthmM}
with $n=1$ and a variable size $N\geq 2$. The approximation becomes
\begin{equation}
\label{eq:appr:PCV}
{_0\mathbb{D}_t^{\a}} x(t) \approx  A(\a,N)t^{-\a}x(t)
+B(\a,N)t^{1-\a}x^{(1)}(t)+\sum_{k=2}^N C(\a,k)t^{1-k-\a}V_k(t).
\end{equation}
Using \eqref{eq:appr:PCV}, we approximate the initial problem
by the following one: to minimize
\begin{multline*}
\tilde J(x)=\int_0^1\Biggl[ A(\a,N)t^{-\a}x(t)
+B(\a,N)t^{1-\a}x^{(1)}(t)\\
+\sum_{k=2}^N C(\a,k)t^{1-k-\a}V_k(t)
- \frac{1}{\Gamma(\frac{7-t}{4})}t^{\frac{3-t}{4}}\Biggr]^2dt
\end{multline*}
subject to
$$
V^{(1)}_k(t)=(k-1)t^{k-2}x(t),
\quad V_k(0)=0, \qquad k=2,\ldots,N,
$$
and
$$
x(0)=0, \quad x(1)=1,
$$
where $\a=(t+1)/4$. This dynamic optimization problem has a system of ordinary
differential equations as a constraint, so it is natural
to solve it as an optimal control problem.
For that, define the control $u$ by
$$
u(t) = A(\a,N)t^{-\a}x(t)+B(\a,N)t^{1-\a}x^{(1)}(t)
+\sum_{k=2}^N C(\a,k)t^{1-k-\a}V_k(t).
$$
We then obtain the control system
$$
x^{(1)}(t)=B^{-1}t^{\a-1}u(t)-AB^{-1}t^{-1}x(t)
-\sum_{k=2}^N B^{-1}C_kt^{-k}V_k(t)
:=f\left(t,x(t),u(t),V(t)\right),
$$
where, for simplification,
$$
A=A(\a,N), \quad B= B(\a,N),\quad C_k=C(\a,k),
\quad \mbox{and} \quad V(t)=(V_2(t),\ldots,V_N(t)).
$$
In conclusion, we wish to minimize the functional
$$
\tilde{J}(x,u,V)=\int_0^1\left[ u(t)
- \frac{1}{\Gamma(\frac{7-t}{4})}t^{\frac{3-t}{4}}\right]^2dt
$$
subject to the first-order dynamic constraints
$$
\begin{cases}
x^{(1)}(t)=f(t,x,u,V),\\
V^{(1)}_k(t)=(k-1)t^{k-2}x(t), \quad k=2,\ldots,N,
\end{cases}
$$
and the boundary conditions
$$
\begin{cases}
x(0)=0,\\
x(1)=1,\\
V_k(0)=0, \qquad k=2,\ldots,N.
\end{cases}
$$
In this case, the Hamiltonian is given by
$$
H(t,x,u,V,\lambda)=\left[ u - \frac{1}{\Gamma(\frac{7-t}{4})}t^{\frac{3-t}{4}}\right]^2
+\lambda_1 f(t,x,u,V)+\sum_{k=2}^N\lambda_k (k-1)t^{k-2}x
$$
with the adjoint vector $\lambda=(\lambda_1,\lambda_2,\ldots,\lambda_N)$ \cite{Pontryagin}.
Following the classical optimal control approach of Pontryagin \cite{Pontryagin},
we have the following necessary optimality conditions:
$$
\frac{\partial H}{\partial u}=0, \quad x^{(1)}
=\frac{\partial H}{\partial \lambda_1},
\quad V_p^{(1)}=\frac{\partial H}{\partial \lambda_p},  \quad \lambda_1^{(1)}
=-\frac{\partial H}{\partial x}, \quad \lambda_p^{(1)}
=-\frac{\partial H}{\partial V_p},
$$
that is, we need to solve the system of differential equations
\begin{equation}
\label{eq:CNP:SH}
\begin{cases}
x^{(1)}(t)= \frac{B^{-1}}{\Gamma(\frac{7-t}{4})}
-\frac{1}{2}B^{-2}t^{2\a-2}\lambda_1(t)
-AB^{-1}t^{-1}x(t)-\sum_{k=2}^NB^{-1}C_kt^{-k}V_k(t),\\
V_k^{(1)}(t)=(k-1)t^{k-2}x(t),\quad k=2,\ldots,N,\\
\lambda_1^{(1)}(t)=AB^{-1}t^{-1}\lambda_1-\sum_{k=2}^N(k-1)t^{k-2}\lambda_k(t),\\
\lambda_k^{(1)}(t)=B^{-1}C_kt^{-k}\lambda_1,\quad k=2,\ldots,N,
\end{cases}
\end{equation}
subject to the boundary conditions
\begin{equation}
\label{eq:CNP:BC}
\begin{cases}
x(0)=0,\\
V_k(0)=0,\quad k=2,\ldots,N,\\
x(1)=1,\\
\lambda_k(1)=0,\quad k=2,\ldots,N.
\end{cases}
\end{equation}
Figure~\ref{fig:5} plots the numerical approximation $\tilde{x}_2(t)$
to the global minimizer $\overline{x}(t)=t$ of the variable order
fractional problem of the calculus of variations
\eqref{eq:ex:VP}--\eqref{eq:ex:BC}, obtained solving
\eqref{eq:CNP:SH}--\eqref{eq:CNP:BC} with $N=2$.
The approximation $\tilde{x}_2(t)$ is already visually indistinguishable
from the exact solution $\overline{x}(t)=t$, and we do not
increase the value of $N$. The effectiveness of our approach is also illustrated
in Table~\ref{tab2}, where some numerical values of the approximation
$\tilde{x}_2(t)$ are given.
\begin{figure}[!ht]
\begin{center}
\psfrag{x\(t\)}{\tiny $\tilde{x}_{2}(t)$}
\includegraphics[scale=0.4,angle=-90]{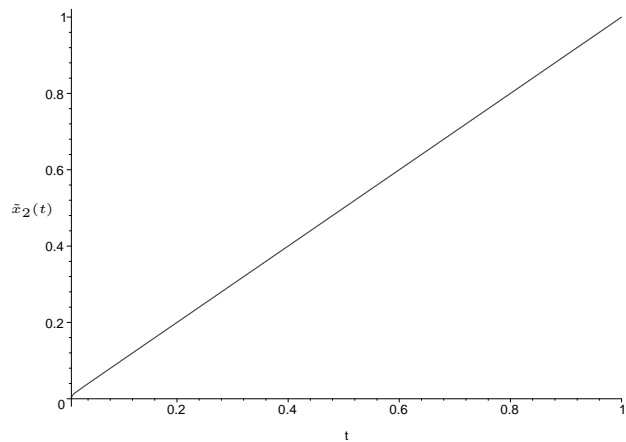}\\
\caption{Approximation $\tilde{x}_2(t)$ to the exact solution
$\overline{x}(t)=t$ of the fractional problem
of the calculus of variations \eqref{eq:ex:VP}--\eqref{eq:ex:BC},
obtained from the application of Theorem~\ref{MainthmM} and the
classical Pontryagin maximum principle, that is,
obtained solving \eqref{eq:CNP:SH}--\eqref{eq:CNP:BC} with $N = 2$.}
\label{fig:5}
\end{center}
\end{figure}
\begin{table}[!ht]
$$
\begin{array}{|c|c|c|c|c|c|c|}\hline
t & 0.2&0.4&0.6&0.8&1\\
\hline
\tilde{x}_2(t) & 0.1998346692 & 0.3999020706 & 0.5999392936 & 0.7999708526 & 1.0000000000 \\
\hline
\end{array}
$$
\caption{Some numerical values of the solution $\tilde{x}_2(t)$
of \eqref{eq:CNP:SH}--\eqref{eq:CNP:BC} with $N = 2$, close to the
values of the global minimizer $\overline{x}(t)=t$ of the fractional
variational problem of variable order \eqref{eq:ex:VP}--\eqref{eq:ex:BC}.}
\label{tab2}
\end{table}


\section*{Acknowledgments}

Work supported by FEDER funds through COMPETE -- Operational
Programme Factors of Competitiveness (``Programa Operacional Factores de Competitividade'')
and by Portuguese funds through the {\it Center for Research
and Development in Mathematics and Applications}
(University of Aveiro) and the Portuguese Foundation for Science and Technology
(``FCT -- Funda\c{c}\~{a}o para a Ci\^{e}ncia e a Tecnologia''), within project
PEst-C/MAT/UI4106/2011 with COMPETE number FCOMP-01-0124-FEDER-022690.
Torres was also supported by EU funding under the
7th Framework Programme FP7-PEOPLE-2010-ITN,
grant agreement number 264735-SADCO.




\begin{thebibliography}{99}

\bibitem{AlmeidaSamko}
A. Almeida\ and\ S. Samko,
Fractional and hypersingular operators in variable
exponent spaces on metric measure spaces,
Mediterr. J. Math. {\bf 6} (2009), no.~2, 215--232.

\bibitem{Atanackovic1}
T. M. Atanackovic, M. Janev, S. Pilipovic\ and\ D. Zorica,
An expansion formula for fractional derivatives of variable order,
Cent. Eur. J. Phys. (2013), DOI: 10.2478/s11534-013-0243-z

\bibitem{Coimbra}
C. F. M. Coimbra,
Mechanics with variable-order differential operators,
Ann. Phys. (8) {\bf 12} (2003), no.~11-12, 692--703.

\bibitem{Coimbra2}
C. F. M. Coimbra, C. M. Soon\ and\ M. H. Kobayashi,
The variable viscoelasticity operator,
Annalen der Physik {\bf 14} (2005), 378--389.

\bibitem{Dalir}
M. Dalir\ and\ M. Bashour,
Applications of fractional calculus,
Appl. Math. Sci. (Ruse) {\bf 4} (2010), no.~21-24, 1021--1032.

\bibitem{Diaz}
G. Diaz\ and\ C. F. M. Coimbra,
Nonlinear dynamics and control of a variable order oscillator
with application to the van der Pol equation,
Nonlinear Dynam. {\bf 56} (2009), no.~1-2, 145--157.

\bibitem{Lorenzo}
C. F. Lorenzo\ and\ T. T. Hartley,
Variable order and distributed order fractional operators,
Nonlinear Dynam. {\bf 29} (2002), no.~1-4, 57--98.

\bibitem{Ma}
S. Ma, Y. Xu\ and\ W. Yue,
Numerical solutions of a variable-order fractional financial system,
J. Appl. Math. {\bf 2012} (2012), Art. ID 417942, 14~pp.

\bibitem{Machado}
J. A. T. Machado, M. F. Silva, R. S. Barbosa, I. S. Jesus,
C. M. Reis, M. G. Marcos\ and\ A. F. Galhano,
Some applications of fractional calculus in engineering,
Math. Probl. Eng. {\bf 2010} (2010), Art. ID 639801, 34~pp.

\bibitem{Malinowska}
A. B. Malinowska\ and\ D. F. M. Torres,
{\it Introduction to the fractional calculus of variations},
Imp. Coll. Press, London, 2012.

\bibitem{Od0}
T. Odzijewicz, A. B. Malinowska\ and\ D. F. M. Torres,
Variable order fractional variational calculus for double integrals,
Proceedings of the 51st IEEE Conference on Decision and Control,
December 10--13, 2012, Maui, Hawaii, Art. no. 6426489 (2012), pp.~6873--6878.
{\tt arXiv:1209.1345}

\bibitem{Od1}
T. Odzijewicz, A. B. Malinowska\ and\ D. F. M. Torres,
Fractional variational calculus of variable order,
Advances in Harmonic Analysis and Operator Theory,
The Stefan Samko Anniversary Volume (Eds: A. Almeida, L. Castro, F.-O. Speck),
Operator Theory: Advances and Applications, Vol.~229, 291--301, Springer, 2013.
{\tt arXiv:1110.4141}

\bibitem{Od2}
T. Odzijewicz, A. B. Malinowska\ and\ D. F. M. Torres,
Noether's theorem for fractional variational problems of variable order,
Cent. Eur. J. Phys. (2013), DOI: 10.2478/s11534-013-0208-2
{\tt arXiv:1303.4075}

\bibitem{MyID:268}
T. Odzijewicz, A. B. Malinowska\ and\ D. F. M. Torres,
A generalized fractional calculus of variations,
Control Cybernet. {\bf 42} (2013), no.~2, in press.
{\tt arXiv:1304.5282}

\bibitem{Pontryagin}
L. S. Pontryagin, V. G. Boltyanskii, R. V. Gamkrelidze\ and\ E. F. Mishchenko,
{\it The mathematical theory of optimal processes},
Translated from the Russian by K. N. Trirogoff;
edited by L. W. Neustadt Interscience Publishers
John Wiley \& Sons, Inc.\, New York, 1962.

\bibitem{Pooseh0}
S. Pooseh, R. Almeida\ and\ D. F. M. Torres,
Approximation of fractional integrals by means of derivatives,
Comput. Math. Appl. {\bf 64} (2012), no.~10, 3090--3100.
{\tt arXiv:1201.5224}

\bibitem{Pooseh1}
S. Pooseh, R. Almeida\ and\ D. F. M. Torres,
Numerical approximations of fractional derivatives with applications,
Asian J. of Control  {\bf 15} (2013), no.~3, 698--712.
{\tt arXiv:1208.2588}

\bibitem{Ramirez1}
L. E. S. Ramirez\ and\ C. F. M. Coimbra,
On the selection and meaning of variable order operators for dynamic modeling,
Int. J. Differ. Equ. {\bf 2010} (2010), Art. ID 846107, 16~pp.

\bibitem{Ramirez2}
L. E. S. Ramirez\ and\ C. F. M. Coimbra,
On the variable order dynamics of the nonlinear wake caused by a sedimenting particle,
Phys. D {\bf 240} (2011), no.~13, 1111--1118.

\bibitem{Samko}
S. G. Samko,
Fractional integration and differentiation of variable order,
Anal. Math. {\bf 21} (1995), no.~3, 213--236.

\bibitem{samko0}
S. G. Samko, A. A. Kilbas\ and\ O. I. Marichev,
{\it Fractional integrals and derivatives},
translated from the 1987 Russian original,
Gordon and Breach, Yverdon, 1993.

\bibitem{SamkoRoss}
S. G. Samko\ and\ B. Ross,
Integration and differentiation to a variable fractional order,
Integral Transform. Spec. Funct. {\bf 1} (1993), no.~4, 277--300.

\bibitem{Valerio}
D. Val\'{e}rio\ and\ J. S. Costa,
Variable-order fractional derivatives and their numerical approximations,
Signal Process. {\bf 91} (2011), 470-–483.

\bibitem{Zhuang}
P. Zhuang, F. Liu, V. Anh\ and\ I. Turner,
Numerical methods for the variable-order fractional
advection-diffusion equation with a nonlinear source term,
SIAM J. Numer. Anal. {\bf 47} (2009), no.~3, 1760--1781.

\end{thebibliography}
\end{document}